\newtheorem{theorem}{{\sc Theorem}}[section]
\newtheorem{definition}{{\sc Definition}}[section]
\newtheorem{proposition}{{\sc Proposition}}[section]
\newtheorem{example}{{\sc Example}}[section]
\title{A note on Carath\'eodory's Extension Theorem}
\author{Alexandre G.~Patriota\\
{\small {\em Departamento de Estat\'istica, Universidade de S\~ao Paulo,
S\~ao Paulo/SP, 05508-090, Brazil}}}
\date{}
\begin{document}
\maketitle

\begin{abstract}

In this note, we show that the Carath\'eodory's extension theorem is still valid for  a class of subsets of $\Omega$ less restricted than a semi-ring, which we call quasi-semi-ring.
\\

\noindent {\it Key words:} Carath\'eodory's extension theorem, semi-rings, quasi-semi-rings, measure theory, probability theory.

\end{abstract}

\section{Introduction}

Due to paradoxes such as the Banach-Tarski paradox \citep[see][]{Banach}, it is not always possible to define a measure (e.g., Lebesgue measure) in the power set of the main set $\Omega$. Instead, we must restrict our attention to certain measurable subsets of $\Omega$.
The Carath\'eodory's extension theorem basically extends a countably additive premeasure defined in a small class, usually a 
semi-ring, to a large class of measurable sets that contains the smaller one. 
 The real line is the main motivation for using a semi-ring as the starting class of subsets, because the Borel sigma-algebra can be 
 generated by a class of semi-open intervals, which is a semi-ring. Therefore, by defining a premeasure on this class of semi-open 
 intervals (which is an easy task), an extension to the Borel sigma-algebra (which contains ``non-pathological'' subsets of $\mathbb{R}$) is readily available through the extension theorem. However, as a semi-ring requires closure by intersections, it may be more difficult to define a semi-ring of subsets of some non-flat surfaces such as cylinders and closed surfaces (sphere, torus, double torus, triple torus, Klein bottle and so on).
 
In this note, we show that it is possible to weaken the assumptions regarding the initial class of subsets in the Carath\'eodory's extension theorem. We define a new class of subsets that does not require closure by intersections and prove that: (1) all elements in this collection are measurable (in the sense of Carath\'eodory's ``splitting principle''), (2) the extension (the outer measure) agrees with the premeasure on the starting collection and (3) it is unique on the smallest ring generated by this collection. Some of the proofs given in this note are similar to those in \cite{Athreya}.

Below we define a quasi-semi-ring of subsets which plays an important role in the construction of our theory.

\begin{definition} Let $\Omega$ be nonempty. A class $\mathcal{A}$ of subsets of $\Omega$ is a quasi-semi-ring if the following conditions hold;
\begin{enumerate}
\item $\varnothing \in \mathcal{A}$,
\item If $A, B \in \mathcal{A}$, then there exist disjoint subsets $B_1, \ldots B_n, C_1, \ldots, C_k \in \mathcal{A}$ such that $A \cap B = \bigcup_{i=1}^n B_i$ and $A \cap B^c = \bigcup_{i=1}^k C_i$, where $n,k < \infty$,
\end{enumerate}
\end{definition}

The main difference between a quasi-semi-ring and a semi-ring is that the former may  not  be closed by finite intersections but the latter must be. It is not hard to see, by the above definition, that a semi-ring is always a quasi-semi-ring, but the converse is not always true. Below, we show some classes of subsets of $\Omega$ that are quasi-semi-rings but are not semi-rings. The first two examples are artificial ones, but the last one is more natural. The readers are invited to find other examples.

\begin{example}
Consider that $A, B, C \subset \Omega$ and $\mathcal{A} = \{\varnothing, A, B, A \cap B \cap C, A \cap B \cap C^c, A \cap B^c \cap C, A \cap B^c \cap C^c, A^c \cap B \cap C, A^c \cap B \cap C^c \}$. Then, $\mathcal{A}$ is a quasi-semi-ring but it is not necessarily a semi-ring (it is not closed under finite intersections). 
\end{example}

In order to better understand the above example, the reader should draw a Venn diagram with the sets $A, B$ and $C$.

\begin{example} Suppose that $\Omega = \mathbb{R}^2$ and $\mathcal{A} = \{\mbox{all semi-closed rectangles where base $\neq$ height}\} \cup{\varnothing}$. It is not a semi-ring, because some intersections of rectangles do produce squares. On the other hand, every square can be represented by finite union of disjoint rectangles with different base and height.
 Note also that if $A, B \in \mathcal{A}$, then $A \cap B$ and $A \cap B^c$ may be $\varnothing$, rectangles with different base and height or finite unions of disjoint rectangles with different base and heigh. Therefore, $\mathcal{A}$ is a quasi-semi-ring.
\end{example}

\begin{example} Let $\Omega$ be a circle in the plane and assume that  $ \mathcal{A}$ is a class containing all the semi-closed  arcs of $\Omega$, assume also that $\varnothing \in \mathcal{A}$ . It is easy to see that $\mathcal{A}$ is not a semi-ring, since it is not closed under intersections. Take the parametrized arcs $A = (0, \frac{3\pi}{2}]\in \mathcal{A}$ and $B=(\pi, \frac{5\pi}{2}]\in \mathcal{A}$, then $A \cap B = (\pi , \frac{3\pi}{2}] \cup (0, \frac{\pi}{2}] \notin \mathcal{A}$. On the other hand, it is a quasi-semi-ring, because if $A,B \in \mathcal{A}$, then
$A \cap B$ and $A\cap B^c$ are unions of semi-closed  disjoint arcs or $\varnothing$ or they are in $\mathcal{A}$. Notice that, $\mathcal{A}$ would be a semi-ring if it were defined as the  class containing all the semi-closed parametrized arcs of $\Omega$ restricted to the interval $(0, 2\pi]$. The quasi-semi-ring does not require such a restriction.
\end{example}

Apparently, a collection of subsets formed by all semi-closed ``pieces'' of any smooth close surface is not a semi-ring (since some intersections are not semi-closed ``pieces''), but, on the other hand, it is a quasi-semi-ring (since these intersections are formed by union of semi-closed ``pieces'').

With the purpose of proving our results, we use the usual tools firstly introduced in \cite{Cara}, namely: the outer measure and the Carath\'eodory's ``splitting principle'' (the criteria for measurability of sets).
Let $\Omega$ be nonempty. Given a premeasure $\mu$ well-defined in a quasi-semi-ring $\mathcal{A}$ (i.e., $\mu(\varnothing) = 0$ and it is countably additive),  the outer measure induced by $\mu$ as a function of sets from the power set  of \ $\Omega$ to $[0,\infty]$ is usually defined as
$\mu^*(A) = \inf \bigg\{ \sum_{j\geq 1} \mu(A_j) : \  A \subset \bigcup_{j\geq 1} A_j, \ \{A_j\}_{j\geq 1} \subset \mathcal{A}\bigg\}
$ for all $A \subset \Omega$.
 In this definition, it should be clear that the covers of $A$ have to be formed by countable many sets. The well-known properties of an outer measure are:  (i) $\mu^*(\varnothing) = 0$, (ii) $\mu^*$ is monotone and (iii) $\mu^*$ is countably subadditive. 

In this note, we use another equivalent definition of outer measure where the covers are formed by  disjoint sets. This will help us to prove that the outer measure equals the premeasure on the quasi-semi-ring.

\begin{proposition}\label{Alter}
The outer measure induced by $\mu$ can alternatively be defined as
\[
\bar{\mu}(A) = \inf \bigg\{ \sum_{j\geq 1} \mu(A_j) : \  A \subset \bigcup_{j\geq 1} A_j, \ \{A_j\}_{j\geq 1} \subset \mathcal{A} \ \mbox{disjoint}\bigg\}
\] for all $A \subset \Omega$.
\end{proposition}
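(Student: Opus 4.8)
The plan is to prove the two inequalities $\mu^*(A) \le \bar{\mu}(A)$ and $\bar{\mu}(A) \le \mu^*(A)$ for an arbitrary $A \subseteq \Omega$. The first is immediate: any disjoint cover of $A$ by members of $\mathcal{A}$ is in particular a cover, so the infimum defining $\bar{\mu}(A)$ ranges over a subfamily of the covers used to define $\mu^*(A)$, whence $\mu^*(A) \le \bar{\mu}(A)$. Everything therefore reduces to showing that an arbitrary countable cover of $A$ by members of $\mathcal{A}$ can be replaced by a disjoint one without increasing the sum $\sum_j \mu(A_j)$.

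The technical core is a disjointification procedure adapted to quasi-semi-rings. First I would establish, by induction on $m$, that whenever $A, B_1, \dots, B_m \in \mathcal{A}$ the set $A \cap B_1^c \cap \cdots \cap B_m^c$ is a \emph{finite} union of pairwise disjoint members of $\mathcal{A}$. The case $m = 1$ is exactly condition (2) of the definition; for the inductive step, if $A \cap B_1^c \cap \cdots \cap B_{m-1}^c = \bigcup_i C_i$ with the $C_i \in \mathcal{A}$ pairwise disjoint, then intersecting with $B_m^c$ gives $\bigcup_i (C_i \cap B_m^c)$, and each $C_i \cap B_m^c$ is itself a finite disjoint union of members of $\mathcal{A}$ again by condition (2), the disjointness of the $C_i$ ensuring that the combined family stays disjoint. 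A consequence I will need is the following monotonicity estimate: if $D_1, \dots, D_m \in \mathcal{A}$ are pairwise disjoint and $\bigcup_{i=1}^m D_i \subseteq A$ for some $A \in \mathcal{A}$, then $\sum_{i=1}^m \mu(D_i) \le \mu(A)$. Indeed, decomposing $A \cap D_1^c \cap \cdots \cap D_m^c = \bigcup_{j=1}^p E_j$ into disjoint members of $\mathcal{A}$ as just shown, $A$ is the disjoint union of $D_1, \dots, D_m, E_1, \dots, E_p$, so finite additivity of $\mu$ — which follows from its countable additivity by inserting copies of $\varnothing$ — yields $\mu(A) = \sum_i \mu(D_i) + \sum_j \mu(E_j) \ge \sum_i \mu(D_i)$.

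With these tools in hand, fix a cover $\{A_j\}_{j \ge 1} \subset \mathcal{A}$ of $A$ with $\sum_j \mu(A_j) < \infty$ (if no such cover exists, $\mu^*(A) = \infty$ and there is nothing to prove). Put $A_n' = A_n \cap A_1^c \cap \cdots \cap A_{n-1}^c$, which by the inductive claim is a finite disjoint union $A_n' = \bigcup_l D_{n,l}$ of members of $\mathcal{A}$. The countable family $\{D_{n,l}\}_{n,l}$ is pairwise disjoint (for fixed $n$ by construction, and $A_m' \cap A_n' = \varnothing$ when $m < n$ since $A_n' \subseteq A_m^c$ while $A_m' \subseteq A_m$), and it still covers $A$ because $\bigcup_n A_n' = \bigcup_n A_n \supseteq A$. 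Since $\bigcup_l D_{n,l} = A_n' \subseteq A_n$, the monotonicity estimate gives $\sum_l \mu(D_{n,l}) \le \mu(A_n)$, and summing over $n$ yields $\sum_{n,l}\mu(D_{n,l}) \le \sum_n \mu(A_n)$. As $\{D_{n,l}\}$ is an admissible competitor in the infimum defining $\bar{\mu}(A)$, this shows $\bar{\mu}(A) \le \sum_n \mu(A_n)$, and taking the infimum over all such covers gives $\bar{\mu}(A) \le \mu^*(A)$, completing the argument.

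The step I expect to be the main obstacle is the inductive claim that $A \cap B_1^c \cap \cdots \cap B_m^c$ is a finite disjoint union of members of $\mathcal{A}$: since a quasi-semi-ring need not be closed under finite intersections, condition (2) cannot be iterated verbatim, and one must carefully track which families remain pairwise disjoint at each stage. The related subtlety is that monotonicity/subadditivity of $\mu$ on a sub-union $\bigcup D_i \subseteq A$ is not automatic here and is obtained only by \emph{completing} the $D_i$ to a finite disjoint decomposition of the ambient set $A \in \mathcal{A}$.
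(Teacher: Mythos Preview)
Your proof is correct and follows essentially the same route as the paper: the trivial inequality $\mu^*\le\bar\mu$, disjointification via $A_n'=A_n\cap A_1^c\cap\cdots\cap A_{n-1}^c$, iterated use of the quasi-semi-ring axiom to write each $A_n'$ as a finite disjoint union in $\mathcal{A}$, and a finite-additivity comparison showing the resulting sum does not exceed $\sum_n\mu(A_n)$. Your explicit isolation of the inductive decomposition lemma and the monotonicity estimate (completing the $D_i$ to a disjoint partition of $A$) is in fact slightly cleaner than the paper's version, which obtains the same inequality by decomposing $A_n\cap\big(\bigcup_{i<n}A_i\big)$ instead.
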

\begin{proof}
Notice that $\mu^*(A)\leq \bar{\mu}(A)$ for all $A \subset \Omega$, since
\[
\bigg\{\{A_j\}_{j\geq 1} \subset \mathcal{A} \ \mbox{disjoint}: \  A \subset \bigcup_{j\geq 1} A_j \bigg\} \subset \bigg\{\{A_j\}_{j\geq 1} \subset \mathcal{A}: \  A \subset \bigcup_{j\geq 1} A_j \bigg\}. 
\]
Define $B_1 = A_1$, $B_2 = A_2 \cap A_1^c$, $B_i = A_i \cap A_{i-1}^c \cap \ldots \cap A_1^c$, for $i \geq 1$. By definition of quasi-semi-rings, there exist disjoint sets $C_1^n, \ldots, C_{k_n}^n \in \mathcal{A}$ such that $A_n \cap A_{n-1}^c = \bigcup_{i=1}^{k_n} C_i^{n}$. Note that $A_{n} \cap A_{n-1}^c \cap A_{n-2}^c = \bigcup_{i=1}^{k_n} (C_i^n \cap A_{n-2}^c)$, therefore exist another disjoint sequence  $D_{1,i}^n, \ldots, D_{l_{n,i}, i}^n \in \mathcal{A}$ such that $(C_i^n \cap A_{n-2}^c)= \bigcup_{j=1}^{l_{i,n}} D_{j, i}^n$, then $A_{n} \cap A_{n-1}^c \cap A_{n-2}^c = \bigcup_{i=1}^{k_n} \bigcup_{j=1}^{l_{i,n}} D_{j, i}^n$. Thus, by repetitively applying this argument, we achieve at $B_n = \bigcup_{i = 1}^{m_n} H_i^n$ such that $H_1^n, \ldots, H_{m_n}^n \in \mathcal{A}$ are disjoint sets. As $\bigcup_{n \geq 1} A_n = \bigcup_{n \geq 1} B_n = \bigcup_{n \geq 1} \bigcup_{i = 1}^{m_n} H_i^n$, we have that for each cover of $A$, $\{A_n\}_{n \geq 1}$, there exist another cover of $A$ formed by disjoint sets $\{\{H_i^n\}_{i = 1}^{m_n}\}_{n \geq 1} \in \mathcal{A}$. Also, observe that  \[A_n = \bigg(A_n \cap \bigg(\bigcup_{i=1}^{n-1}A_i\bigg)^c\bigg) \cup \bigg(A_n \cap \bigg(\bigcup_{i=1}^{n-1}A_i\bigg)\bigg) = \bigg(\bigcup_{i=1}^{m_n} H_i^n\bigg) \cup \bigg( \bigcup_{i=1}^{n-1}( A_n \cap A_i)\bigg),\]
there exist  disjoint sets  $N_1^{ni}, \ldots, N_{k_{ni}}^{ni} \in \mathcal{A}$ such that $A_n\cap A_i = \bigcup_{j=1}^{k_{ni}} N_j^{ni}$, then by finite additivity $\mu(A_n) = \sum_{i=1}^{m_n}\mu(H_i^n) + \sum_{i=1}^{n-1}\sum_{j=1}^{k_{ni}}\mu(N_j^{ni})$, thus \[\mu(A_n) \geq \sum_{i=1}^{m_n}\mu(H_i^n) \quad \mbox{and} \quad \sum_{n\geq1}\mu(A_n) \geq \sum_{n\geq1}\sum_{i=1}^{m_n}\mu(H_i^n).\] Therefore, if $A \subset \Omega$, then $\bar{\mu}(A) \leq \mu^*(A)$ and we conclude that  $\bar{\mu}(A)= \mu^*(A)$ for all $A \subset \Omega$.
\end{proof}

In what follows, we present the Carath\'eodory's ``splitting principle'' which defines measurable sets.
A set $A \subset \Omega$ is said to be $\mu^*$-mensurable if for all $E \subset \Omega$,
$ \mu^*(E) = \mu^*(E \cap A) + \mu^*(E \cap A^c).$ The class of all measurable subsets $\mathcal{M} = \{A; \ A \mbox{ is  $\mu^*$-measurable}\}$ is indeed a sigma-algebra and the triplet $(\Omega, \mathcal{M}, \mu^*)$ is a measure space independently of the starting class of subsets $\mathcal{A}$ (the premeasure $\mu$ must be countably additive). 

\subsection{Extension Theorem}
This section establishes  that all elements listed in a quasi-semi-ring are measurable and also that the outer measure is equivalent to the premeasure on the quasi-semi-ring.

\begin{theorem}\label{Main}(Extension theorem) Let $\Omega$ be nonempty, $\mathcal{A}$ a quasi-semi-ring of $\Omega$ and $\mu$ a countably additive premeasure on $\mathcal{A}$. Then,
\begin{enumerate}
\item $\mathcal{A} \subset \mathcal{M}$,
\item $\mu^*(A) = \mu(A)$ for all  $A \in \mathcal{A}$.
\end{enumerate}
\end{theorem}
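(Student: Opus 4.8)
The plan is to prove part 2 first and then use it, together with the disjoint-cover description of the outer measure from Proposition~\ref{Alter}, to establish part 1.

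For part 2, fix $A \in \mathcal{A}$. Since $A$ covers itself (a trivial disjoint cover by a single set), we immediately get $\mu^*(A) = \bar\mu(A) \le \mu(A)$. For the reverse inequality, let $\{A_j\}_{j \ge 1} \subset \mathcal{A}$ be an arbitrary disjoint cover of $A$; by Proposition~\ref{Alter} it suffices to show $\mu(A) \le \sum_{j\ge1}\mu(A_j)$. Intersecting with $A$, write $A = \bigcup_{j\ge1}(A \cap A_j)$, a disjoint union. Using the quasi-semi-ring axiom, each $A \cap A_j$ decomposes as a finite disjoint union $\bigcup_{i=1}^{n_j} B_i^j$ of sets in $\mathcal{A}$, so $A = \bigcup_{j\ge1}\bigcup_{i=1}^{n_j} B_i^j$ is a countable disjoint union of members of $\mathcal{A}$ whose union is $A \in \mathcal{A}$; countable additivity of $\mu$ gives $\mu(A) = \sum_{j\ge1}\sum_{i=1}^{n_j}\mu(B_i^j)$. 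On the other hand, for each fixed $j$, $A_j = (A_j \cap A) \cup (A_j \cap A^c)$; the first piece is $\bigcup_{i=1}^{n_j} B_i^j$ and the second, by the quasi-semi-ring axiom again, is a finite disjoint union of members of $\mathcal{A}$, so finite additivity yields $\mu(A_j) \ge \sum_{i=1}^{n_j}\mu(B_i^j)$. Summing over $j$ gives $\sum_{j\ge1}\mu(A_j) \ge \mu(A)$, as required. (A subtlety worth a remark: to apply countable additivity of $\mu$ one needs the sets $\{B_i^j\}$ to be \emph{mutually} disjoint across different $j$ as well; this holds because the $A_j$ are disjoint and $B_i^j \subset A_j$.)

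For part 1, fix $A \in \mathcal{A}$ and an arbitrary test set $E \subset \Omega$. Subadditivity of $\mu^*$ already gives $\mu^*(E) \le \mu^*(E \cap A) + \mu^*(E \cap A^c)$, so the content is the reverse inequality. If $\mu^*(E) = \infty$ there is nothing to prove, so assume it is finite and fix $\varepsilon > 0$. By Proposition~\ref{Alter}, choose a disjoint cover $\{A_j\}_{j\ge1} \subset \mathcal{A}$ of $E$ with $\sum_{j\ge1}\mu(A_j) \le \mu^*(E) + \varepsilon$. For each $j$, apply the quasi-semi-ring axiom to $A_j$ and $A$: $A_j \cap A = \bigcup_{i=1}^{n_j} B_i^j$ and $A_j \cap A^c = \bigcup_{i=1}^{k_j} C_i^j$, all finite disjoint unions of members of $\mathcal{A}$, and by finite additivity $\mu(A_j) = \sum_{i=1}^{n_j}\mu(B_i^j) + \sum_{i=1}^{k_j}\mu(C_i^j)$. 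Now $\{B_i^j\}_{i,j}$ is a countable cover of $E \cap A$ by sets of $\mathcal{A}$ and $\{C_i^j\}_{i,j}$ is a countable cover of $E \cap A^c$, so by the definition of $\mu^*$ (using the ordinary, not necessarily disjoint, form is enough here),
\[
\mu^*(E \cap A) + \mu^*(E \cap A^c) \le \sum_{j\ge1}\sum_{i=1}^{n_j}\mu(B_i^j) + \sum_{j\ge1}\sum_{i=1}^{k_j}\mu(C_i^j) = \sum_{j\ge1}\mu(A_j) \le \mu^*(E) + \varepsilon.
\]
Letting $\varepsilon \to 0$ finishes the proof.

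The main obstacle is the bookkeeping in part 1 — making sure that the finite decompositions supplied by the quasi-semi-ring axiom for each index $j$ can be reindexed into legitimate countable covers of $E\cap A$ and $E\cap A^c$, and that the additivity identities line up so that the double sums collapse back to $\sum_j \mu(A_j)$. No single step is deep; the care is entirely in handling the two-level indexing and in invoking countable versus finite additivity in the right places. One should also note explicitly that nowhere is closure under intersection used — only the axiom that $A\cap B$ and $A\cap B^c$ split into finite disjoint unions within $\mathcal{A}$ — which is exactly the point of weakening semi-rings to quasi-semi-rings.
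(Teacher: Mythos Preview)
Your proof is correct and follows essentially the same route as the paper: both arguments split each covering set $A_j$ as $(A_j\cap A)\cup(A_j\cap A^c)$, use the quasi-semi-ring axiom to write each piece as a finite disjoint union in $\mathcal{A}$, apply finite additivity to recover $\mu(A_j)$ as the sum of the pieces, and then compare with $\mu^*(E\cap A)+\mu^*(E\cap A^c)$ (for part~1) or with $\mu(A)$ via countable additivity (for part~2). The only differences are cosmetic: you prove part~2 before part~1 (though you do not actually use part~2 in your proof of part~1, despite announcing you would), and in part~1 you phrase the estimate via an $\varepsilon$-near-optimal disjoint cover whereas the paper bounds $\sum_j\mu(A_j)$ for an arbitrary cover and then takes the infimum --- these are equivalent formulations of the same inequality.
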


\begin{proof} Let $A \in \mathcal{A}$, $E \subset \Omega$  and $\{A_j\}_{j\geq 1} \subset \mathcal{A}$ such that $E \subset \bigcup_{j\geq 1} A_j$. Notice that 
\[A_j  = (A_j \cap A ) \cup (A_j \cap A^c).\]
By definition of quasi-semi-ring, for each $j\geq 1$ there exist disjoint sets $B_{1}^j, \ldots, B_{k_j}^j  , C_{1}^j, \ldots C_{n_j}^j \in \mathcal{A}$ such that $A_j \cap A = \bigcup_{i=1}^{k_j} B_{i}^j$ and  $A_j \cap A^c = \bigcup_{i=1}^{n_j} C_{i}^j$. Therefore,
\[
A_j = \bigg(\bigcup_{i=1}^{k_j} B_{i}^j \bigg) \cup \bigg( \bigcup_{i=1}^{n_j} C_{i}^j\bigg).
\]
By the finite additivity of the premeasure $\mu$ we have
\[
\mu(A_j) = \sum_{i=1}^{k_j} \mu(B_{i}^j) + \sum_{i=1}^{n_j} \mu(C_{i}^j)
\] and
\[
\sum_{j\geq 1}\mu(A_j) = \sum_{j\geq 1} \sum_{i=1}^{k_j} \mu(B_{i}^j) + \sum_{j\geq 1}\sum_{i=1}^{n_j} \mu(C_{i}^j).
\]
Notice that $E\cap A \subset \bigcup_{j\geq 1} \bigcup_{i=1}^{k_j} B_{i}^j$  and $E \cap A^c \subset \bigcup_{j\geq 1} \bigcup_{i=1}^{n_j} C_{i}^j$, hence, by definition,
\[
\sum_{j\geq 1}\mu(A_j) \geq \mu^*(E \cap A) + \mu^*(E\cap A^c),
\] for all covers, $\{A_j\}_{j\geq1}\subset \mathcal{A}$, of $E$. Then,
\[
\mu^*(E)\geq \mu^*(E \cap A) + \mu^*(E\cap A^c).
\]
By subadditivity we conclude that $\mu^*(E) = \mu^*(E \cap A) + \mu^*(E\cap A^c)$. That is, $\forall \ A \in \mathcal{A} \Rightarrow A \in \mathcal{M}$, thus $\mathcal{A} \subset \mathcal{M}$, which proves item 1.

Now, let $A \in \mathcal{A}$ and suppose that $\mu^*(A) < \infty$ (if it is infinity the equality is obvious). 
 For each $\epsilon >0$, there exist a cover of $A$, $\{A_n\}_{n\geq 1} \subset \mathcal{A}$ such that
\begin{equation}\label{Def-inf}
\mu^*(A) \leq \sum_{n\geq 1} \mu(A_n) \leq \mu^*(A)+ \epsilon.
\end{equation}

Without lost of generality, consider that the cover of $A$ is formed by disjoint sets (see Proposition \ref{Alter}). Note that $A = \bigcup_{n\geq 1}(A \cap A_n)$, then there exist disjoint sets $M_{1}^n, \ldots, M_{r_n}^n \in \mathcal{A}$ such that $A \cap A_n = \bigcup_{i=1}^{r_n}M_{i}^n$. Therefore, by the countably additive property, we have that:
\[
\mu(A) = \mu\bigg( \bigcup_{n\geq 1}\bigcup_{i=1}^{r_n} M_{i}^n \bigg) =  \sum_{n\geq 1}\sum_{i=1}^{r_n} \mu(M_{i}^n).
\] On the other hand, there exist also disjoint sets $N_{1}^n, \ldots, N_{w_n}^n\in \mathcal{A}$ such that $A_n \cap A^c = \bigcup_{i=1}^{w_n}N_{i}^n$, thus
\[
A_n = (A_n \cap A) \cup (A_n \cap A^c) = \bigg(\bigcup_{i=1}^{r_n}M_{i}^n\bigg) \cup \bigg(\bigcup_{i=1}^{w_n}N_{i}^n\bigg).
\] By finite additivity of the measure,
\[
\sum_{n\geq 1 } \mu(A_n) = \sum_{n\geq 1 } \sum_{i=1}^{r_n}\mu(M_{i}^n) + \sum_{n\geq 1 } \sum_{i=1}^{w_n}\mu(N_{i}^n) \geq \sum_{n\geq 1 } \sum_{i=1}^{r_n}\mu(M_{i}^n)
\] and
\[
\mu(A) \leq \sum_{n\geq 1 } \mu(A_n).
\]
Then, for each $\epsilon > 0$, 
\[\mu(A) \leq \sum_{n\geq 1} \mu(A_n) \leq \mu^*(A)+ \epsilon\] implying that $\mu(A) \leq \mu^*(A)$. We conclude that $\mu(A) = \mu^*(A)$ for all  $A \in \mathcal{A}$.
\end{proof}

\subsection{Uniqueness of the extension}

As a quasi-semi-ring is not a $\pi$-system, the uniqueness of the extension is not guaranteed. In this section, we prove that the extension is unique when restricted to the smallest ring generated by the quasi-semi-ring.

\begin{proposition} If $\mathcal{A}$ is a quasi-semi-ring, then $r(\mathcal{A}) = \{A: \ A = \cup_{i=1}^k B_i, \ \{B_i\}_{i=1}^k \in \mathcal{A} \ \mbox{disjoint}\}$ is the smallest ring generated by $\mathcal{A}$.
\end{proposition}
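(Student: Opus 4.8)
The plan is to show the set $r(\mathcal{A})$ of all finite disjoint unions of members of $\mathcal{A}$ is a ring, that it contains $\mathcal{A}$, and that any ring containing $\mathcal{A}$ must contain it; the last two assertions are immediate (take $k=1$ for containment; a ring is closed under finite unions, so it absorbs all disjoint unions of $\mathcal{A}$-sets). So the work is entirely in verifying $r(\mathcal{A})$ is a ring, i.e.\ that it is closed under finite unions, under intersections, and under set-theoretic differences (equivalently, closed under union and under difference, since $E\cap F = E\setminus(E\setminus F)$).

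First I would record the basic reduction step coming straight from the quasi-semi-ring axiom: for any $A,B\in\mathcal{A}$, both $A\cap B$ and $A\cap B^{c}=A\setminus B$ lie in $r(\mathcal{A})$. Then I would prove closure under \emph{intersection}: if $E=\bigcup_{i=1}^{m}A_i$ and $F=\bigcup_{j=1}^{n}B_j$ are disjoint-union representations, then $E\cap F=\bigcup_{i,j}(A_i\cap B_j)$; each $A_i\cap B_j$ is a finite disjoint union of $\mathcal{A}$-sets by the axiom, and the doubly-indexed family $\{A_i\cap B_j\}$ is pairwise disjoint because the $A_i$ are disjoint and the $B_j$ are disjoint, so after expanding each piece we get $E\cap F$ as a finite disjoint union of $\mathcal{A}$-sets, hence $E\cap F\in r(\mathcal{A})$.

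Next, closure under \emph{difference}. For $E=\bigcup_{i=1}^{m}A_i$ (disjoint) and $B\in\mathcal{A}$ a single set, $E\setminus B=\bigcup_{i=1}^{m}(A_i\cap B^{c})$, a disjoint union (the $A_i$ being disjoint) of members of $r(\mathcal{A})$ by the axiom, and a finite disjoint union of $r(\mathcal{A})$-sets is in $r(\mathcal{A})$ — so $E\setminus B\in r(\mathcal{A})$. Iterating over the pieces of a general $F=\bigcup_{j=1}^{n}B_j$ gives $E\setminus F=(\cdots((E\setminus B_1)\setminus B_2)\cdots)\setminus B_n\in r(\mathcal{A})$. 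Finally, closure under \emph{union}: $E\cup F=(E\setminus F)\cup F$, and here $E\setminus F$ and $F$ are disjoint; writing each as a finite disjoint union of $\mathcal{A}$-sets and concatenating the two lists exhibits $E\cup F$ as a finite disjoint union of $\mathcal{A}$-sets. Together with $\varnothing\in\mathcal{A}\subset r(\mathcal{A})$, this shows $r(\mathcal{A})$ is a ring containing $\mathcal{A}$, and minimality as noted above completes the proof.

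The one genuinely delicate point — really the only place the argument could go wrong — is the bookkeeping of \emph{disjointness} when substituting the quasi-semi-ring decompositions into a disjoint union: one must check that replacing each $A_i\cap B_j$ (or each $A_i\cap B^c$) by its own internal disjoint decomposition keeps the \emph{whole} resulting collection pairwise disjoint, which it does precisely because distinct index pairs $(i,j)$ already give disjoint sets and the internal decompositions live inside those. I would state this reduction once as a small auxiliary observation — "a finite disjoint union of members of $r(\mathcal{A})$ is again in $r(\mathcal{A})$" — and then invoke it uniformly, so the closure proofs become one-liners. Everything else is routine set algebra requiring no further ideas.
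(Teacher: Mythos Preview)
Your proof is correct and follows essentially the same approach as the paper: both establish that $r(\mathcal{A})$ is closed under the ring operations by reducing to the quasi-semi-ring decomposition of $A\cap B$ and $A\cap B^{c}$ for $A,B\in\mathcal{A}$, and both handle the general difference $E\setminus F$ by peeling off one $B_j$ at a time. The only organizational difference is that you derive closure under union last, from difference via $E\cup F=(E\setminus F)\cup F$, whereas the paper argues union first and then invokes it for intersection; your explicit auxiliary observation that a finite \emph{disjoint} union of $r(\mathcal{A})$-sets lies in $r(\mathcal{A})$ makes the disjointness bookkeeping cleaner than the paper's somewhat informal treatment of the union step.
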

\begin{proof} By definition of quasi-semi-ring, if $A,B \in \mathcal{A}$, then there exist disjoint sequences $A_1, \ldots, A_k,$ $ B_1, \ldots, B_n \in \mathcal{A}$ such that $A \cap B = \bigcup_{i=1}^k A_i$ and $A \cap B^c = \bigcup_{i=1}^n B_i$. By construction, $\mathcal{A} \subset r(\mathcal{A})$, thus $A, B, A \cap B, A \cap B^c \in r(\mathcal{A}) \Rightarrow  A \cup B \in r(\mathcal{A})$ (notice that $A$ and $B$ need not be disjoint sets). 

Now, let $A, B \in r(\mathcal{A})$, then there exist disjoint sets $C_1, \ldots, C_k\in \mathcal{A}$ and $D_1, \ldots, D_n\in \mathcal{A}$ such that $A = \bigcup_{i=1}^k C_i$  and $B = \bigcup_{i=1}^n D_i$  (where $C_i$ and $D_j$ need not be disjoint).
 Notice that $A \cup B =  \bigcup_{j=1}^n \bigcup_{i=1}^k (C_i \cup D_j) \in r(\mathcal{A})$, i.e., $r(\mathcal{A})$ is closed under finite unions. Note also that $A \cap B= \bigcup_{j=1}^n \bigcup_{i=1}^k (C_i \cap D_j)$ with $C_i \cap D_j \in r(\mathcal{A})$ for all $i=1, \ldots, k$ and $j=1, \ldots,n$, then $A \cap B \in r(\mathcal{A})$, since $r(\mathcal{A})$ is closed under finite unions. Finally, as $A \cap B^c = \bigcup_{i=1}^k (C_i \cap D_n^c \cap D_{n-1}^c \cap \ldots \cap D_1^c)$ and  $C_i \cap D_n^c \cap D_{n-1}^c \cap \ldots \cap D_1^c \in r(\mathcal{A})$ for all $i=1, \ldots, k$ we have that $A \cap B^c \in r(\mathcal{A})$. We conclude that $r(\mathcal{A})$ is a ring generated by $\mathcal{A}$. In fact, $r(\mathcal{A})$ is the smallest ring generated by $\mathcal{A}$, since all other rings generated by $\mathcal{A}$ must be closed under finite unions of sets from $\mathcal{A}$.
\end{proof}


\begin{theorem}\label{Uniqueness-ring} Let $\mathcal{A}$ be a quasi-semi-ring of $\Omega$. Let $\mu_1$ and $\mu_2$ be two countably additive and finite measures defined on $\mathcal{M}$ such that $\mu_1(A) = \mu_2(A)$ for all $A \in \mathcal{A}$. Then, $\mu_1(A) = \mu_2(A)$ for all $A \in r(\mathcal{A})$.
\end{theorem}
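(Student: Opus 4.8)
The plan is to reduce everything to the explicit description of $r(\mathcal{A})$ established in the preceding proposition: every $A \in r(\mathcal{A})$ can be written as a finite \emph{disjoint} union $A = \bigcup_{i=1}^{k} B_i$ with $B_1,\ldots,B_k \in \mathcal{A}$. Before exploiting this, I would first record that $r(\mathcal{A}) \subset \mathcal{M}$: by Theorem \ref{Main} we have $\mathcal{A} \subset \mathcal{M}$, and $\mathcal{M}$ is a sigma-algebra, in particular a ring, that contains $\mathcal{A}$; hence it contains the smallest ring generated by $\mathcal{A}$, namely $r(\mathcal{A})$. Consequently $\mu_1$ and $\mu_2$, being defined on all of $\mathcal{M}$, are defined on every element of $r(\mathcal{A})$, so the statement is meaningful.

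Next, fix $A \in r(\mathcal{A})$ together with a disjoint decomposition $A = \bigcup_{i=1}^{k} B_i$, $B_i \in \mathcal{A}$, as above. Since $\mu_1$ and $\mu_2$ are countably additive (and $\mu_j(\varnothing)=0$), they are finitely additive, so
\[
\mu_1(A) = \sum_{i=1}^{k} \mu_1(B_i) = \sum_{i=1}^{k} \mu_2(B_i) = \mu_2(A),
\]
where the middle equality is precisely the hypothesis that $\mu_1$ and $\mu_2$ agree on $\mathcal{A}$. As $A \in r(\mathcal{A})$ was arbitrary, this proves the theorem.

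There is essentially no serious obstacle here; the real work was already carried out in identifying $r(\mathcal{A})$ with the collection of finite disjoint unions of members of $\mathcal{A}$. The only points deserving a moment's care are: (a) that the value $\mu_j(A)$ is independent of the chosen disjoint decomposition of $A$, which is automatic because finite additivity forces $\mu_j(A) = \sum_i \mu_j(B_i)$ for \emph{any} such decomposition; and (b) the role of the finiteness assumption, which serves only to keep the sums above unambiguous real numbers — the same computation in fact goes through verbatim with values in $[0,\infty]$. Note finally that one cannot appeal to a $\pi$--$\lambda$ (Dynkin) uniqueness argument on $\mathcal{A}$ itself, since $\mathcal{A}$ need not be a $\pi$-system; passing to $r(\mathcal{A})$ is exactly what sidesteps this difficulty.
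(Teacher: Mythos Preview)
Your proof is correct and follows essentially the same approach as the paper: decompose an arbitrary $A\in r(\mathcal{A})$ as a finite disjoint union of sets in $\mathcal{A}$ and apply finite additivity of $\mu_1,\mu_2$ term by term. The paper phrases this via the auxiliary class $\mathcal{C}=\{A\in r(\mathcal{A}):\mu_1(A)=\mu_2(A)\}$ and shows $r(\mathcal{A})\subset\mathcal{C}$, but the computation is identical; your additional remarks (that $r(\mathcal{A})\subset\mathcal{M}$, that the value is decomposition-independent, and that finiteness is not really needed) are accurate and add clarity.
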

\begin{proof} Define $\mathcal{C} = \{A \in r(\mathcal{A}), \ \mu_1(A) = \mu_2(A)\}$, then $\mathcal{A} \subset \mathcal{C} \subset r(\mathcal{A})$.
 Let $A \in r(\mathcal{A})$, then $A = \bigcup_{i=1}^k D_i$, with $D_1, \ldots, D_k\in \mathcal{A}$ being disjoint sets. Notice that $D_1, \ldots, D_k \in \mathcal{C}$, then
$\mu_1(D_i) = \mu_2(D_i)$ for all $i = 1,\ldots, k$ and (by  additive property of the involved measures)
\[\mu_1(A) =  \mu_1\bigg(\bigcup_{i=1}^k D_i\bigg) = \sum_{i=1}^k\mu_1(D_i) = \sum_{i=1}^k\mu_2(D_i)=\mu_2\bigg(\bigcup_{i=1}^k D_i\bigg) = \mu_2(A)\]~
 implying that $A=\bigcup_{i=1}^k D_i \in \mathcal{C}$, therefore, $ r(\mathcal{A}) \subset \mathcal{C}$ and $r(\mathcal{A}) = \mathcal{C}$.
\end{proof}

Next, we establish that if $\Omega$ is covered by elementary sets in $\mathcal{A}$ of finite premeasure, then every set in $\mathcal{A}$ can be represented by a union of disjoint sets in $\mathcal{A}$ with also finite premeasure. 

\begin{proposition}\label{Ex} Let $\mathcal{A}$ be a quasi-semi-ring and $\mu$ a premeasure defined in $\mathcal{A}$. Assume that there exist a cover $\{A_i\}_{i \geq 1}\subset \mathcal{A}$ for $\Omega$ such that $\mu(A_i)< \infty$ for all $i \geq 1$. Then, for all $A \in \mathcal{A}$, there exist disjoint sets $\{C_i\}_{i \geq 1}\subset \mathcal{A}$ such that $A = \bigcup_{i \geq 1}C_i$ with $\mu(C_i) < \infty$ for $i \geq 1$.
\end{proposition}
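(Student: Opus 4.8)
The plan is to reuse the disjointification device from the proof of Proposition~\ref{Alter}, now carried out inside the fixed set $A$ and supplemented by a bookkeeping of finiteness. Given $A \in \mathcal{A}$, since $\{A_i\}_{i\ge1}$ covers $\Omega$ I would first write
\[
A = A \cap \Omega = A \cap \bigcup_{i\ge1} A_i = \bigcup_{i\ge1}(A \cap A_i),
\]
and then disjointify by setting $B_1 = A \cap A_1$ and $B_i = A \cap A_i \cap A_{i-1}^c \cap \cdots \cap A_1^c$ for $i \ge 2$, so that the $B_i$ are pairwise disjoint with $\bigcup_{i\ge1} B_i = A$. The work is to show that each $B_i$ is a \emph{finite} disjoint union of sets from $\mathcal{A}$ having \emph{finite} $\mu$-measure.

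The extra ingredient I would isolate first is a ``monotonicity on pieces'' remark: if $D, E \in \mathcal{A}$, then by the quasi-semi-ring axiom $D \cap E = \bigcup_j P_j$ and $D \cap E^c = \bigcup_l Q_l$ with the $P_j, Q_l \in \mathcal{A}$ pairwise disjoint, hence $D$ is the disjoint union of all the $P_j$ and $Q_l$, and finite additivity of $\mu$ gives $\mu(D) = \sum_j \mu(P_j) + \sum_l \mu(Q_l)$; since $\mu \ge 0$, every $P_j$ and every $Q_l$ has $\mu$-measure at most $\mu(D)$. Applying this with $D = A_i$ and $E = A$ shows that $A \cap A_i$ is a finite disjoint union of sets of $\mathcal{A}$ each of $\mu$-measure at most $\mu(A_i) < \infty$.

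Next I would run an induction on the number of complemented factors. The base case is the previous sentence. For the inductive step, if a set $S$ is already known to be a finite disjoint union $S = \bigcup_t P_t$ of $\mathcal{A}$-sets with $\mu(P_t) < \infty$, then applying the ``monotonicity on pieces'' remark to each pair $D = P_t$, $E = A_m$ expresses $P_t \cap A_m^c$ as a finite disjoint union of $\mathcal{A}$-sets of $\mu$-measure at most $\mu(P_t) < \infty$; since the $P_t$ are disjoint, assembling these gives $S \cap A_m^c$ again as a finite disjoint union of $\mathcal{A}$-sets of finite $\mu$-measure. Intersecting $A \cap A_i$ successively with $A_{i-1}^c, \dots, A_1^c$ (distributing over the finite disjoint unions obtained at each stage, exactly as in the proof of Proposition~\ref{Alter}) then yields $B_i = \bigcup_{l=1}^{p_i} H_l^i$ with $H_1^i, \dots, H_{p_i}^i \in \mathcal{A}$ disjoint and $\mu(H_l^i) < \infty$.

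Finally, since the $B_i$ are pairwise disjoint and within each $B_i$ the $H_l^i$ are disjoint, the whole family $\{H_l^i : i\ge1,\ 1\le l\le p_i\}$ consists of pairwise disjoint sets of $\mathcal{A}$ of finite $\mu$-measure whose union is $\bigcup_{i\ge1} B_i = A$; re-indexing it as $\{C_i\}_{i\ge1}$ (and padding with copies of $\varnothing$ if the family happens to be finite) gives the statement. I expect the only delicate point to be the finiteness bookkeeping through the iterated intersections; the underlying set identities are nothing more than the repeated use of the quasi-semi-ring axiom already performed in Proposition~\ref{Alter}.
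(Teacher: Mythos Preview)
Your argument is correct, and it is close in spirit to the paper's proof, but the organization differs in one place worth noting. The paper first replaces the given cover $\{A_i\}$ by a \emph{disjoint} cover of $\Omega$ (citing the disjointification already carried out in Proposition~\ref{Alter}), and only then intersects with $A$: since the $A_i$ are now disjoint, a single application of the quasi-semi-ring axiom to each pair $(A_i, A)$ produces disjoint pieces $B_{i,j}\in\mathcal{A}$ with $A_i\cap A=\bigcup_j B_{i,j}$, and these are automatically pairwise disjoint across $i$. Finiteness is then obtained by a bare appeal to monotonicity ($B_{i,j}\subset A_i$). You instead keep the original (possibly overlapping) cover, intersect with $A$ first, and then perform the disjointification \emph{inside} $A$ via the iterated complements $B_i=A\cap A_i\cap A_{i-1}^c\cap\cdots\cap A_1^c$, redoing the inductive splitting of Proposition~\ref{Alter} and tracking finiteness through your ``monotonicity on pieces'' remark. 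Your version is more self-contained and makes the finiteness bookkeeping explicit (which is a genuine advantage, since monotonicity of a premeasure on a quasi-semi-ring is not entirely trivial), while the paper's version is shorter because it reuses Proposition~\ref{Alter} as a black box and needs only one quasi-semi-ring decomposition per index.
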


\begin{proof}
Let  $\{A_i\}_{i \geq 1}\subset \mathcal{A}$ such that $\Omega = \bigcup_{i \geq 1} A_i$ with $\mu(A_i)<\infty$ for $i \geq 1$. By arguments given in Proposition \ref{Alter}, we can consider $\{A_i\}_{i \geq 1}$ disjoint sets. 

If $A \in \mathcal{A}$, then $A =  \bigcup_{i \geq 1}( A_i \cap A)$, since $A \subset \bigcup_{i \geq 1}A_i$. There exist a disjoint sequence $B_{i,1}, \ldots, B_{i,k_i} \in \mathcal{A}$ such that $A_i \cap A = \bigcup_{j = 1}^{k_i} B_{i,j}$. By monotonicity, we have that $\mu(B_{i,j})<\infty$ for all $i,j$ (since $B_{i,j} \subset A_i$ for all $i,j$). Therefore, exist a disjoint sequence of sets $\{\{B_{i,j}\}_{j=1}^{k_i}\}_{i \geq 1} \in \mathcal{A}$ such that
\[
A =  \bigcup_{i \geq 1} \bigcup_{j =1}^{k_i} B_{i,j}\quad \mbox{and}\quad \mu(B_{i,j})< \infty
\] for all $j=1, \ldots,k_i$ and  $i\geq 1$.
\end{proof}
Now, we can extend Theorem \ref{Uniqueness-ring} to the case of sigma-finite measures.

\begin{theorem}(Uniqueness theorem)  Let $\mathcal{A}$ be a quasi-semi-ring of $\Omega$ and $\mu$ a (countably additive) sigma-finite measure (i.e., there exist $\{A_i\}_{i \geq 1}\subset \mathcal{A}$ such that $\Omega = \bigcup_{i \geq 1} A_i$ with $\mu(A_i)<\infty$ for $i \geq 1$). Then, $\mu^*$ is the unique extension on $r(\mathcal{A})$ that agrees with $\mu$ on $\mathcal{A}$.
\end{theorem}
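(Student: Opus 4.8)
The plan is to combine Proposition~\ref{Ex} (the disjoint decomposition of members of $\mathcal{A}$ into finite-measure pieces) with the finite-measure uniqueness result, Theorem~\ref{Uniqueness-ring}, via a standard localize-and-sum argument. First I would dispose of existence: the text already records that $(\Omega,\mathcal{M},\mu^*)$ is a measure space, and Theorem~\ref{Main} gives $\mu^*(A)=\mu(A)$ for every $A\in\mathcal{A}$; since $r(\mathcal{A})\subseteq\mathcal{M}$, this shows $\mu^*|_{r(\mathcal{A})}$ is an extension of $\mu$, so only uniqueness is at stake. Accordingly, let $\nu$ be any countably additive measure on $r(\mathcal{A})$ with $\nu(A)=\mu(A)=\mu^*(A)$ for all $A\in\mathcal{A}$; the goal is $\nu(A)=\mu^*(A)$ for every $A\in r(\mathcal{A})$.

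The key step is to write an arbitrary $A\in r(\mathcal{A})$ as a countable disjoint union of members of $\mathcal{A}$, each of finite $\mu$-measure. By hypothesis there is a cover $\{A_i\}_{i\geq1}\subseteq\mathcal{A}$ of $\Omega$ with $\mu(A_i)<\infty$, which may be taken disjoint by the disjointification used in the proof of Proposition~\ref{Alter}. Given $A\in r(\mathcal{A})$, write $A=\bigcup_{j=1}^{k}D_j$ with $D_1,\dots,D_k\in\mathcal{A}$ disjoint; then $A=\bigcup_{i\geq1}\bigcup_{j=1}^{k}(D_j\cap A_i)$, and for each pair $(i,j)$ the quasi-semi-ring axiom applied to $D_j,A_i\in\mathcal{A}$ splits $D_j\cap A_i$ into finitely many disjoint members of $\mathcal{A}$, each contained in $A_i$ and hence of finite $\mu$-measure by monotonicity — this is exactly the mechanism of Proposition~\ref{Ex}. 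Collecting all these pieces yields a countable disjoint family $\{B_m\}_{m\geq1}\subseteq\mathcal{A}$ with $A=\bigcup_{m\geq1}B_m$ and $\mu(B_m)<\infty$.

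Once this representation is available, countable additivity finishes the argument: since $\nu$ and $\mu^*$ are both countably additive and both agree with $\mu$ on $\mathcal{A}$,
\[
\nu(A)=\sum_{m\geq1}\nu(B_m)=\sum_{m\geq1}\mu(B_m)=\sum_{m\geq1}\mu^*(B_m)=\mu^*(A).
\]
Alternatively, to follow the phrase ``extend Theorem~\ref{Uniqueness-ring}'' more literally, one can set $\nu_i(\,\cdot\,):=\nu(\,\cdot\cap A_i)$ and $\mu^*_i(\,\cdot\,):=\mu^*(\,\cdot\cap A_i)$ on $r(\mathcal{A})$ (note $\cdot\cap A_i\in r(\mathcal{A})$ since a ring is closed under intersection), observe these are finite measures by monotonicity and $\nu(A_i)=\mu(A_i)<\infty$, check via the quasi-semi-ring property and finite additivity that $\nu_i=\mu^*_i$ on $\mathcal{A}$, apply Theorem~\ref{Uniqueness-ring} to get $\nu_i=\mu^*_i$ on $r(\mathcal{A})$, and sum over $i$ using $A=\bigcup_{i\geq1}(A\cap A_i)$.

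I expect the only friction to be bookkeeping rather than genuine difficulty: one must keep track that the competing extension $\nu$ is a priori defined only on $r(\mathcal{A})$, so any invocation of Theorem~\ref{Uniqueness-ring} must use only values on $r(\mathcal{A})$ and $\mathcal{A}$ (which its proof does), and one must verify that the local traces $\nu_i,\mu^*_i$ really are finite and countably additive on $r(\mathcal{A})$. It is worth noting that the finite disjoint representation $A=\bigcup_{j=1}^{k}D_j$ alone already forces $\nu=\mu^*$ on $r(\mathcal{A})$ by finite additivity, so the role of $\sigma$-finiteness here is to make the statement parallel the classical one and to route the proof through Proposition~\ref{Ex} and Theorem~\ref{Uniqueness-ring}.
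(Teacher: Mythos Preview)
Your primary argument is correct and matches the paper's approach exactly: decompose any $A\in r(\mathcal{A})$ into a countable disjoint union of finite-measure members of $\mathcal{A}$ via Proposition~\ref{Ex}, then invoke countable additivity of both $\nu$ and $\mu^*$. Your closing remark---that the finite disjoint representation $A=\bigcup_{j=1}^{k}D_j$ together with finite additivity already forces $\nu=\mu^*$ on $r(\mathcal{A})$, so that $\sigma$-finiteness is not genuinely needed at this stage---is correct and goes a step beyond what the paper makes explicit.
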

\begin{proof}
Suppose that there exist another measure $\nu$ on $r(\mathcal{A})$ that agrees with $\mu$ on $\mathcal{A}$. By Proposition \ref{Ex}, every set in $\mathcal{A}$ can be expressed as union of disjoint sets in $\mathcal{A}$ of finite premeasure. By assumption, $\nu$ and $\mu^*$ must agree for each one of these elementary sets in $\mathcal{A}$ with finite premeasure, by countably additive, we conclude that $\nu$ agree with $\mu^*$ for every set in the ring (since the sets in $r(\mathcal{A})$ can be represented by finite unions of disjoint elementary sets in $\mathcal{A}$ of finite measures).
\end{proof}

The smallest sigma-algebra generated by $\mathcal{A}$ is the smallest sigma-algebra generated by $r(\mathcal{A})$. It is known that if two sigma-finite measures agree on the ring $r(\mathcal{A})$ they must agree on the smallest sigma algebra generated by $r(\mathcal{A})$. Therefore, two sigma-finite premeasures defined on $\mathcal{A}$ must agree in the smallest sigma-algebra generated by $\mathcal{A}$.


\section*{Acknowledgments}

I gratefully acknowledge grants from FAPESP (Brazil). I wish to thank Professor Nelson Ithiro Tanaka for valuable comments and discussions on this manuscript.

{\small

}

\end{document}